\theoremstyle{plain}
\newtheorem{theorem}{Theorem}[section]
\newtheorem{lemma}[theorem]{Lemma}
\theoremstyle{remark}
\newtheorem{claim}{Claim}
\begin{document}

\title [On weak reducing disks and disk surgery]
{On weak reducing disks and disk surgery}

\author[J. H. Lee]{Jung Hoon Lee}
\address{Department of Mathematics and Institute of Pure and Applied Mathematics,
Chonbuk National University, Jeonju 54896, Korea}
\email{junghoon@jbnu.ac.kr}

\subjclass[2010]{Primary: 57M25}
\keywords{bridge position, weak reducing disk, disk surgery}

\begin{abstract}
Let $K$ be an unknot in $8$-bridge position in the $3$-sphere.
We give an example of a pair of weak reducing disks $D_1$ and $D_2$ for $K$ such that
both disks obtained from $D_i$ ($i = 1, 2$) by a surgery along any outermost disk in $D_{3-i}$,
cut off by an outermost arc of $D_i \cap D_{3-i}$ in $D_{3-i}$, are not weak reducing disks,
i.e. the property of weak reducibility of compressing disks is not preserved by a disk surgery.
\end{abstract}

\maketitle

\section{Introduction}\label{sec1}

A closed $3$-manifold $M$ admits a Heegaard splitting $V \cup_S W$,
a decomposition of $M$ into two handlebodies $V$ and $W$.
The notion of Heegaard splitting extends to a bridge splitting $(V, V \cap K) \cup_S (W, W \cap K)$,
where $K$ is a knot (or link) in $M$.

A {\em {disk complex}} $\mathcal{D}(V)$ (resp. $\mathcal{D}(V - K)$)
is a simplicial complex defined as follows.
(A disk complex $\mathcal{D}(W)$ (resp. $\mathcal{D}(W - K)$) is defined similarly.)

\begin{itemize}
\item Vertices of $\mathcal{D}(V)$ (resp. $\mathcal{D}(V - K)$) are
isotopy classes of compressing disks for $S$ (resp. $S - K$) in $V$ (resp. $V - K$).
\item A collection of $k+1$ vertices forms a $k$-simplex
if there are representatives for each that are pairwise disjoint.
\end{itemize}

There have been many works related to disk complexes to understand the topology of $3$-manifolds,
e.g. the Hempel distance \cite{Hempel}, \cite{Scharlemann-Tomova},
\cite{Ido-Jang-Kobayashi}, \cite{Qiu-Zou-Guo}, \cite{Johnson-Moriah},
a geometric structure of a disk complex \cite{Masur-Schleimer}, \cite{Li},
a subcomplex of a disk complex \cite{Cho-Koda}.

Subcomplexes of a disk complex such as primitive disk complex, weak reducing disk complex are important
since they are related to lower genus (or bridge number) splitting,
hence to a mapping class group (the Goeritz group) \cite{Cho-Koda},
or topological minimality \cite{E}.

We consider weak reducing disks for an unknot in bridge position in $S^3$.
A $2$-bridge position of an unknot admits no weak reducing disk.
The weak reducing disk complex for an unknot in $3$-bridge position is contractible \cite{Kwon-Lee}.
But for $n \ge 4$, it is not known
whether the weak reducing disk complex for an unknot in $n$-bridge position is even connected or not.
When two disks with nonempty intersection are given,
typically we use standard disk surgery to construct a path between them in the disk complex.
In this paper, we give an example such that
a disk surgery for a pair of weak reducing disks does not yield any weak reducing disk.

\begin{theorem}\label{thm1}
Let $K$ be an unknot in $8$-bridge position in $S^3$.
There is a pair of weak reducing disks $D_1$ and $D_2$ for $K$ such that
both disks obtained from $D_i$ ($i = 1, 2$) by a surgery along any outermost disk in $D_{3-i}$,
cut off by an outermost arc of $D_i \cap D_{3-i}$ in $D_{3-i}$, are not weak reducing disks.
\end{theorem}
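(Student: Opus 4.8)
The plan is to build the example by hand and then reduce the whole question to a combinatorial problem on the punctured bridge sphere. Realize $S$ as a round $2$-sphere meeting the unknot $K$ in $16$ points, so that $V$ and $W$ are $3$-balls each carrying eight trivial (boundary-parallel) bridges, and identify $S - K$ with the $16$-punctured sphere. Since $V$ and $W$ are balls, a compressing disk for $S - K$ in $V - K$ (resp. $W - K$) is recorded, up to isotopy, by its boundary curve: an essential simple closed curve in $S - K$ that does not separate the two endpoints of any single $V$-bridge (resp. $W$-bridge). Two disks on opposite sides are disjoint exactly when their boundary curves are disjoint, so a disk $D \subset V - K$ is a weak reducing disk precisely when some essential curve respecting the $W$-bridge pairing is disjoint from $\partial D$. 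Working entirely with these curves and the two bridge-pairings, I would lay out the eight $V$-bridges and eight $W$-bridges in a controlled, mildly symmetric pattern and write down two explicit weak reducing disks $D_1, D_2 \subset V - K$ whose boundary curves are essential and cross in a small, prescribed family of arcs.

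First I would verify that $D_1$ and $D_2$ are genuine weak reducing disks. By the above, this only requires exhibiting, for each $i$, one essential curve respecting the $W$-pairing and disjoint from $\partial D_i$; I would display such partners $D_1', D_2' \subset W - K$ directly from the chosen pattern. This is the easy half and fixes the conventions for the harder analysis.

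Next I would carry out the surgery bookkeeping. After removing any circle components of $D_1 \cap D_2$ by innermost-disk isotopies, the intersection is a union of arcs; I would identify the outermost arcs of $D_1 \cap D_2$ in $D_{3-i}$, the outermost disks $E \subset D_{3-i}$ they cut off, and, for each such arc $\alpha$, the two disks obtained by cutting $D_i$ along $\alpha$ and capping each piece with a copy of $E$. Because the crossing pattern is small and symmetric, there are only finitely many surgered disks, and I would record each one by its boundary curve in $S - K$ and by the way that curve regroups the eight $V$-bridges.

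The hard part will be the final step: showing that none of these surgered disks is a weak reducing disk. Exhibiting one obstructing curve is not enough — I must rule out \emph{every} disjoint compressing disk on the $W$-side, so I need to understand all of $\mathcal{D}(W - K)$. The strategy is to classify, up to isotopy, the essential curves in $S - K$ that respect the $W$-bridge pairing (equivalently, the admissible ways to separate the eight $W$-bridges without splitting any one of them), and then to show that the boundary curve of each surgered disk essentially intersects every such $W$-curve — or else that the surgered boundary is inessential in $S - K$, in which case the surgered disk is not even a compressing disk. Running this intersection analysis for every candidate $W$-disk against every surgered disk, and confirming that no disjoint pair survives, is where the real work lies; the eight-bridge count and the specific crossing pattern are engineered so that each surgery either collapses to an inessential curve or regroups the bridges just enough to cross all admissible $W$-curves.
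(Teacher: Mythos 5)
There is a genuine gap, and it sits exactly where you admit ``the real work lies.'' Your plan for the final step is to classify, up to isotopy, all essential simple closed curves in $S - K$ that respect the $W$-bridge pairing, and then check that each surgered boundary intersects every one of them. This is not a finite task: in a $16$-punctured sphere there are infinitely many isotopy classes of such curves (Dehn twists about essential curves already produce infinitely many that respect the pairing), so there is no enumeration to run, and you provide no finiteness reduction or structural argument to replace it. Note also that ``respects the bridge pairing'' is only a homological necessary condition for bounding a disk in $W - K$, not a characterization: a curve can enclose each $W$-bridge's endpoints in pairs and still be homotopically nontrivial in the genus-$8$ handlebody $W - K$. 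Using the larger class is logically harmless for the negative direction, but it makes your proposed classification even less tractable. The paper avoids all of this with a structural tool you are missing: it fixes a pants decomposition of $S - K$ coming from the blue bridge disks and five compressing disks in $W - K$, shows that any hypothetical compressing disk $F \subset W - K$ disjoint from the surgered disk must, after minimizing intersections, produce a \emph{wave} in some pair of pants (outermost-disk argument), and then kills every such wave either directly (Lemma 2) or via a free-group word argument in $\pi_1(W - K)$ (Lemma 1, Claims 1 and 2 in the proof of Lemma 3). That argument rules out \emph{all} disjoint compressing disks at once, with no enumeration.

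The second, smaller gap is that your construction is left as a promissory note (``a controlled, mildly symmetric pattern''), whereas the entire difficulty of the theorem is engineering the pair so that the surgery outcomes are manageable. The paper takes $D_1$ to be a cap over a bridge disk and builds $D_2$ as a band sum of two disks $D'_2$ and $D''_2$ (themselves band sums of explicit disks $C_1, C_2, C_3$), precisely so that $D_1 \cap D_2$ is two arcs and \emph{every} outermost-disk surgery, on either side, yields a disk isotopic to $D'_2$ or $D''_2$. Non-weak-reducibility then only has to be proved for those two explicit disks. Without that design, even a correct intersection-analysis method would have to be run against an unstructured family of surgered disks; with it, the theorem reduces to Lemmas 2 and 3. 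To repair your proposal you would need both ingredients: an explicit construction whose surgeries collapse to finitely many known disks, and a wave/$\pi_1$-type obstruction argument (or something equivalent) in place of the impossible classification step.
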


There is still a possibility that the weak reducing disk complex is connected
for an $n$-bridge ($n \ge 4$) position of the unknot, but
it would be difficult to find a disk (in a path in the weak reducing disk complex)
that is not obtained by a disk surgery.

\section{Weak reducing disks and disk surgery}\label{sec2}

Let $S^3 = V \cup_S W$ be a decomposition of $S^3$ into two $3$-balls $V$ and $W$
with the common boundary sphere $S$.
Let $K$ be a knot in $S^3$ such that
$V \cap K$ and $W \cap K$ are collections of $n$ boundary parallel arcs.
Then $(V, V \cap K) \cup_S (W, W \cap K)$ is called an {\em {$n$-bridge splitting}} of $(S^3, K)$, and
$S - K$ is the {\em {$n$-bridge sphere}}.
We say that $K$ is in {\em {$n$-bridge position}} with respect to $S$.
Each arc of $V \cap K$ and $W \cap K$ is called a {\em {bridge}}.

A {\em {bridge disk}} $\Delta$ in $V$ is a disk such that
$\partial \Delta$ is the endpoint union of two arcs $a$ and $b$,
where $b$ is a bridge and $a$ is an arc in $S$, and
$\Delta \cap K = b$.
In other words, $\Delta$ is a boundary parallelism disk.
A disk $D$ in $V - K$ is a {\em {compressing disk}} if $\partial D$ is essential in $S - K$.
For a bridge disk $\Delta$, the frontier of a neighborhood of $\Delta$ in $V$
is called a {\em {cap}} over $\Delta$.
All the above notions are defined similarly in $W$.
A compressing disk $D$ in $V - K$ is a {\em {weak reducing disk}} if
there is a compressing disk $E$ in $W - K$ such that $D \cap E = \emptyset$, and
$(D, E)$ is called a weak reducing pair.

Suppose that two disks $D_1$ and $D_2$ intersect in a collection of arcs.
Let $\alpha$ be an outermost arc of $D_1 \cap D_2$ in $D_2$ and
$\Delta$ be the corresponding outermost disk in $D_2$ cut off by $\alpha$.
The arc $\alpha$ cuts $D_1$ into two disks $D'_1$ and $D''_1$.
We call $D'_1 \cup \Delta$ and $D''_1 \cup \Delta$
the disks obtained by a {\em {surgery of $D_1$ along $\Delta$}}.

Let $\Delta_1, \ldots, \Delta_n$ be a collection of $n$ pairwise disjoint bridge disks in $W$.
Let $a_i = \Delta_i \cap S$.
The complement of $K$ in $W$ is a genus $n$ handlebody, so
$\pi_1 (W - K)$ is a free group on $n$ generators.
A simple closed curve $\gamma$ in $S - K$ represents an element of $\pi_1 (W - K)$ and
it is written as a word $w$ on $n$ generators:
each time $\gamma$ passes through an $a_i$, the same generator $a^{\pm 1}_i$ is given to $w$.

\begin{lemma}\label{lem1}
If a simple closed curve $\gamma$ in $S - K$ bounds a disk in $W - K$,
then the word $w$ of $\gamma$ is freely reduced to an empty word.
\end{lemma}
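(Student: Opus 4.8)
The plan is to prove this by relating the topological condition "bounds a disk in $W - K$" to the algebraic condition "represents the trivial element of $\pi_1(W - K)$," and then to identify the word $w$ with a specific word representing the homotopy class of $\gamma$. Since $W - K$ is a genus $n$ handlebody, its fundamental group is free on $n$ generators, and in a free group an element is trivial if and only if any word representing it freely reduces to the empty word. So the heart of the matter is to verify that the word $w$ read off from the intersections of $\gamma$ with the arcs $a_1, \ldots, a_n$ genuinely represents the class $[\gamma] \in \pi_1(W - K)$ with respect to a free generating set dual to the bridge disks $\Delta_1, \ldots, \Delta_n$.

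First I would set up the correspondence between the bridge disks and free generators explicitly. The collection of caps over the bridge disks $\Delta_1, \ldots, \Delta_n$ cuts the handlebody $W - K$ into a $3$-ball, so each $\Delta_i$ (more precisely its dual cocore) corresponds to a generator $a_i$, and the arc $a_i = \Delta_i \cap S$ on the bridge sphere is the trace of this cutting on $S - K$. I would observe that each time $\gamma$ crosses $a_i$ transversely, the loop passes from one side of the cap over $\Delta_i$ to the other, contributing exactly one occurrence of $a_i^{\pm 1}$, with the sign determined by the direction of crossing. This is precisely the standard way one reads off the image of a loop under the isomorphism $\pi_1(W-K) \cong F_n$ induced by a complete system of meridian (co-)disks.

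Next, since $\gamma$ bounds a disk $D$ in $W - K$, the loop $\gamma$ is null-homotopic in $W - K$, so its class $[\gamma]$ is trivial in $\pi_1(W - K) \cong F_n$. Combining this with the identification above, the word $w$ represents the identity in the free group $F_n$. The final step is then purely algebraic: an element of a free group is trivial exactly when any word representing it reduces to the empty word after cancelling all adjacent inverse pairs, which is the conclusion of the lemma.

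The main obstacle I expect is the careful verification that the word $w$, as defined by the purely combinatorial rule "record $a_i^{\pm 1}$ each time $\gamma$ meets $a_i$," actually coincides (up to free reduction, and up to the ambiguity of choosing a basepoint and orientations) with the algebraically defined image of $[\gamma]$ under the isomorphism $\pi_1(W - K) \cong F_n$. One must ensure that the arcs $a_i$ are in minimal or at least transverse position with respect to $\gamma$, that the chosen generators are genuinely dual to the cutting system, and that no subtlety arises from $\gamma$ possibly running parallel to an $a_i$ or from the arcs sharing endpoints on $\partial$-parallel bridges. Once this dictionary between the geometric intersection word and the algebraic homotopy class is pinned down, the rest follows immediately from the freeness of $\pi_1(W - K)$.
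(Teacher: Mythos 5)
Your proposal is correct and follows essentially the same route as the paper: the paper's proof is simply the contrapositive of your argument (if $w$ does not freely reduce to the empty word, it is nontrivial in the free group $\pi_1(W-K)$, contradicting that $\gamma$ bounds a disk), resting on the same two facts you isolate, namely that the intersection word represents $[\gamma]$ under the identification $\pi_1(W-K)\cong F_n$ and that trivial elements of a free group are exactly those whose words freely reduce to the empty word. The extra verification you flag as the "main obstacle" is precisely the setup the paper builds in before stating the lemma, so nothing further is needed.
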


\begin{proof}
Suppose that $w$ is not freely reduced to an empty word.
Then $w$ is not a trivial element in $\pi_1 (W - K)$.
It contradicts that $\gamma$ bounds a disk in $W - K$.
\end{proof}

\section{Proof of Theorem \ref{thm1}}\label{sec3}

Let $K$ be an unknot in $8$-bridge position with respect to $S$.
It is well known that an $n$-bridge position of the unknot
is unique for every natural number $n$ \cite{Otal}.
We fix a collection of $16$ bridge disks $\Delta_i$ ($i = 1, \ldots, 16$) for $K$ such that
the union of all the arcs $a_i = \Delta_i \cap S$ ($i = 1, \ldots, 16$) is a simple closed curve.
Here, $\Delta_i$ is in $V$ and $a_i$ is colored red for an odd $i$, and
$\Delta_i$ is in $W$ and $a_i$ is colored blue for an even $i$.
See Figure \ref{fig1}.

\begin{figure}[ht!]
\begin{center}
\includegraphics[width=11cm]{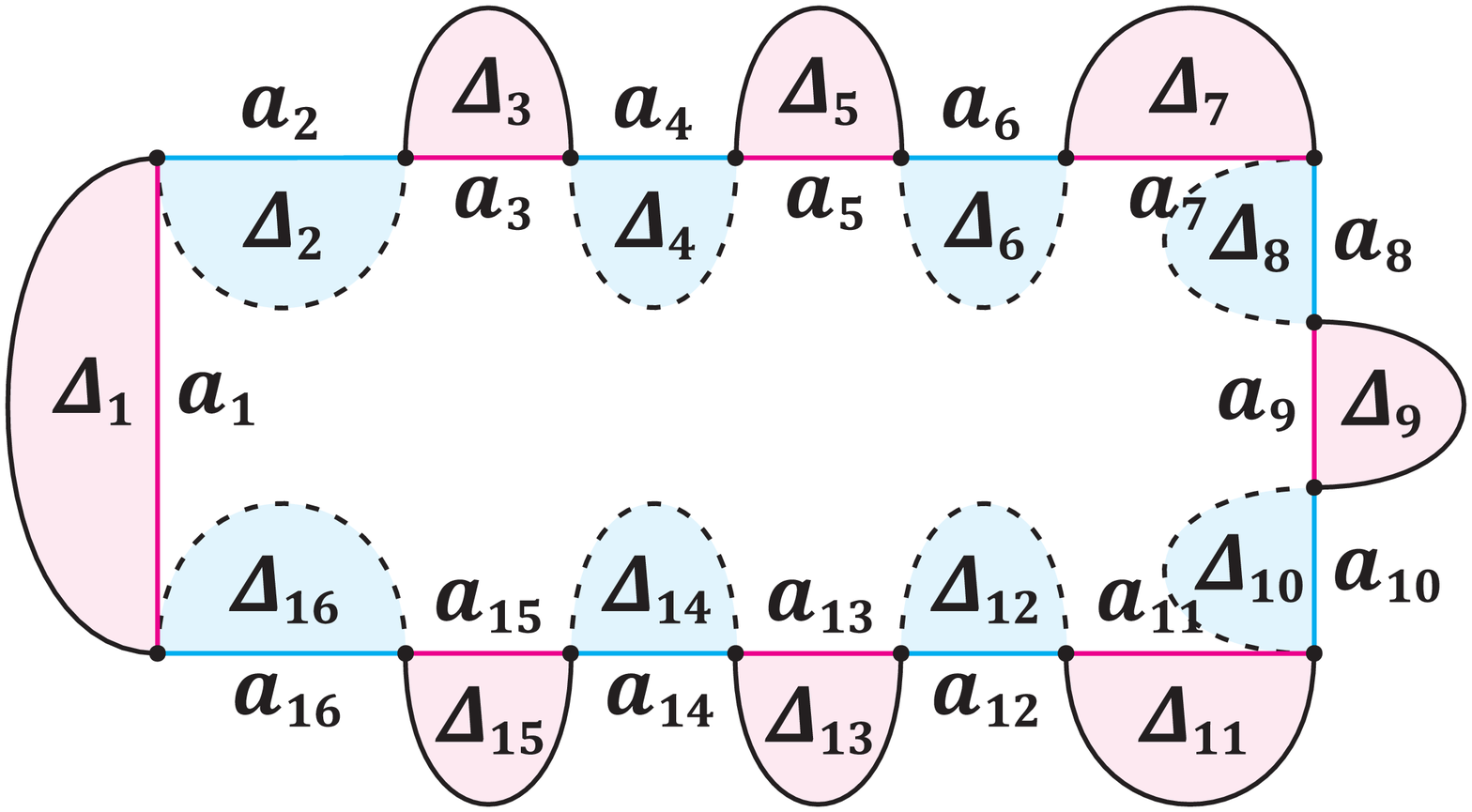}
\caption{$\Delta_i$ and $a_i$ ($i = 1, \ldots, 16$).}\label{fig1}
\end{center}
\end{figure}

We draw five disjoint blue loops $l_i$ ($i = 1, \ldots, 5$)
in $S - \bigcup^8_{i=1} a_{2i}$ as in Figure \ref{fig2}.
Each $l_i$ bounds a compressing disk $E_i$ in $W - K$.
A component of the complement of the union of $l_i$'s in $S - K$ is either
a pair of pants or a $4$-punctured disk.
But if we cut a $4$-punctured disk along two blue arcs in it, then we get a pair of pants.
So we may regard that the blue arcs and loops give a pants decomposition $\{ P_i \}^6_{i=1}$.
Let $P_1$ be the pair of pants containing $a_1$.
For our convenience, let $X = (\bigcup^8_{i=1} \Delta_{2i}) \cup (\bigcup^5_{i=1} E_i$) and
$x = (\bigcup^8_{i=1} a_{2i}) \cup (\bigcup^5_{i=1} l_i)$,
i.e. $X$ is the union of bridge disks in $W$ and compressing disks in $W - K$, and
$x$ is the union of blue arcs and loops.

\begin{figure}[ht!]
\begin{center}
\includegraphics[width=10cm]{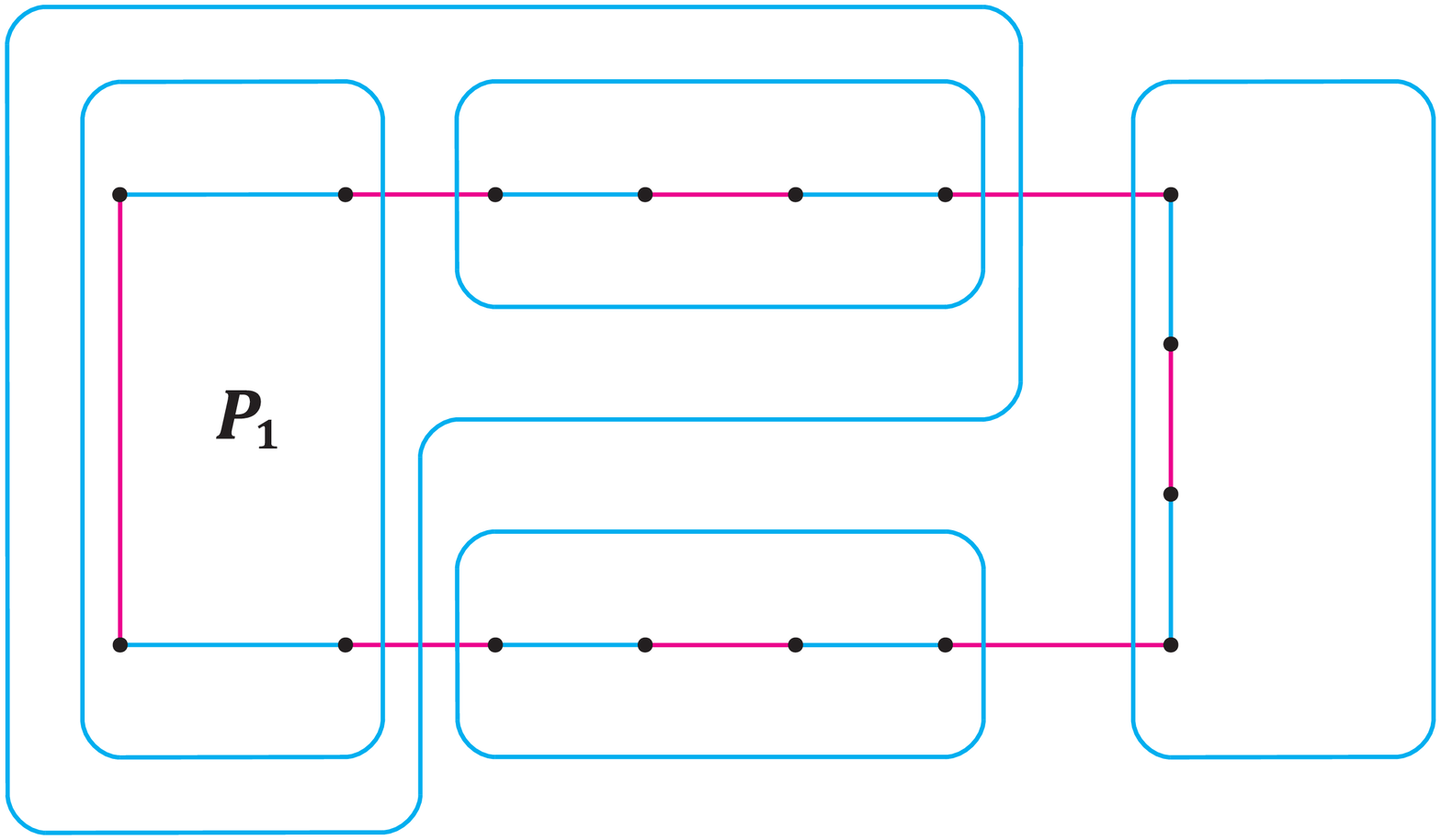}
\caption{A pants decomposition.}\label{fig2}
\end{center}
\end{figure}

Let $D_1$ be a cap over $\Delta_1$.
It is obvious that $D_1$ is a weak reducing disk.
The other disk $D_2$ will be obtained from disks simpler than $D_2$ by band sums.
We can see that the three curves in Figure \ref{fig3}
bound compressing disks, denoted by $C_1, C_2, C_3$, in $V - K$ respectively.
The disk $D'_2$ is a band sum of $C_1$ and $C_2$ as in Figure \ref{fig4}.
Let $d'_2 = \partial D'_2$.
It is essential in $S - K$.

\begin{figure}[ht!]
\begin{center}
\includegraphics[width=12cm]{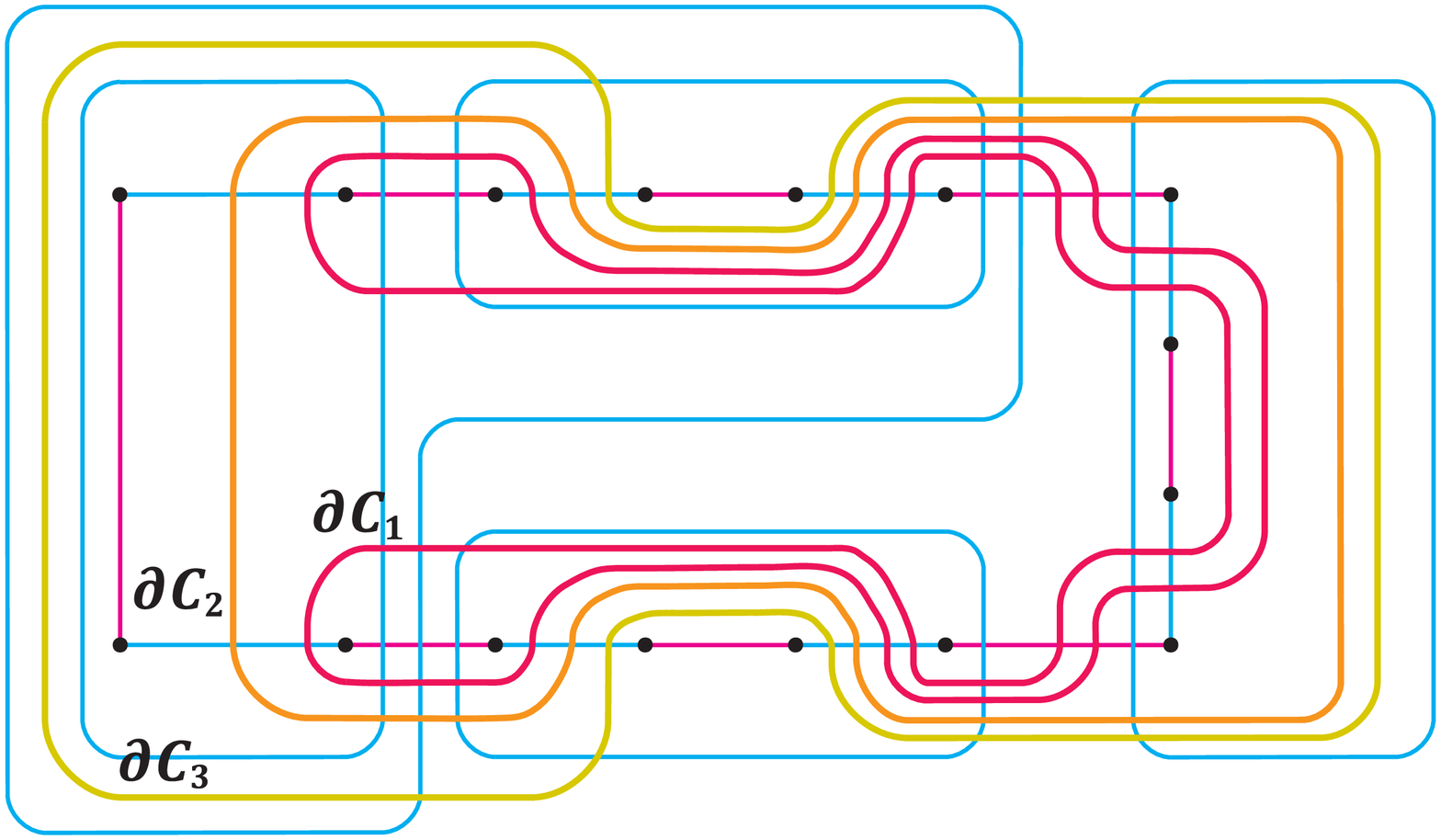}
\caption{$\partial C_1, \partial C_2$, and $\partial C_3$.}\label{fig3}
\end{center}
\end{figure}

\begin{figure}[ht!]
\begin{center}
\includegraphics[width=12cm]{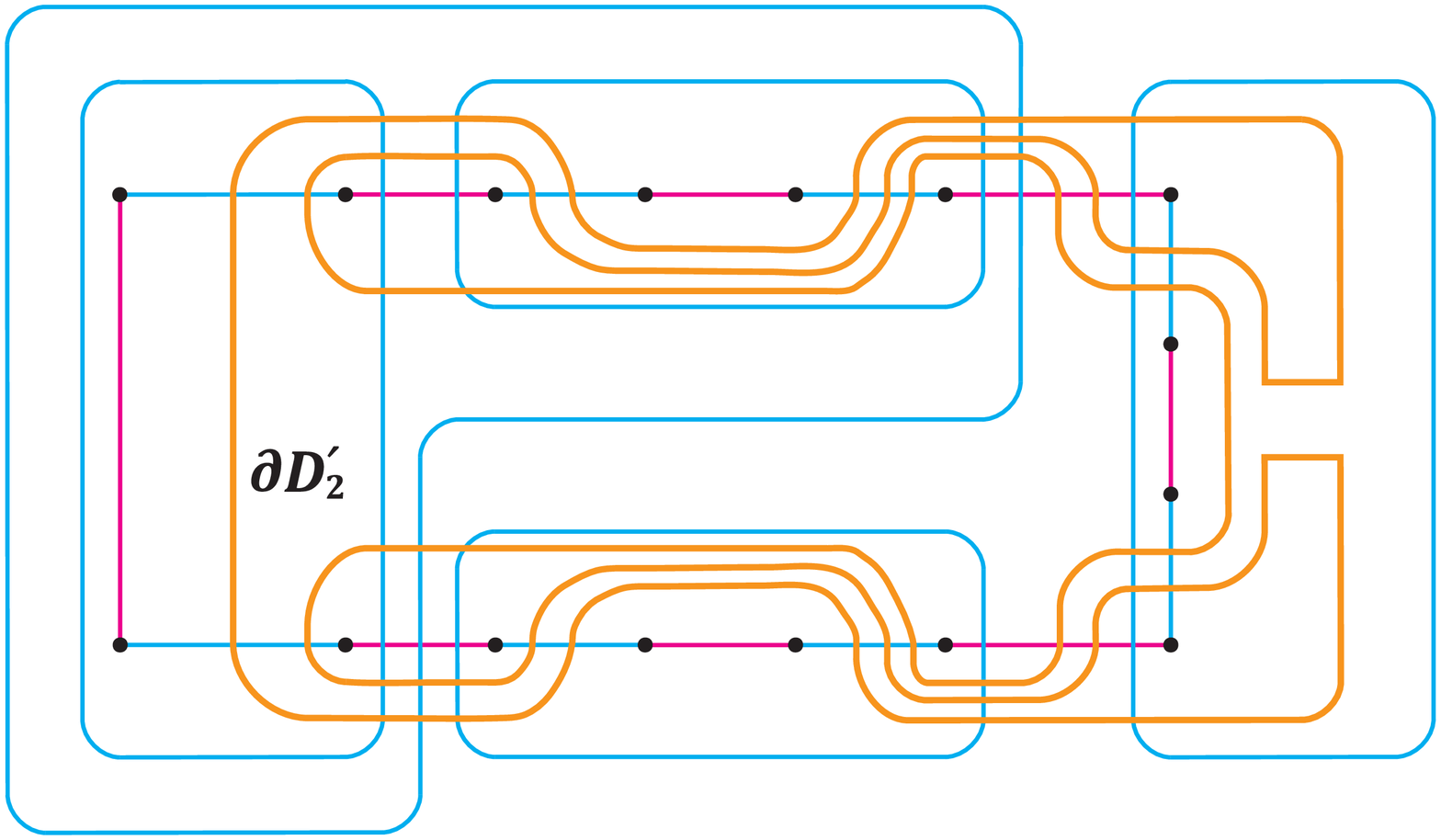}
\caption{The disk $D'_2$ is a band sum of $C_1$ and $C_2$.}\label{fig4}
\end{center}
\end{figure}

\begin{lemma}\label{lem2}
The disk $D'_2$ is not a weak reducing disk.
\end{lemma}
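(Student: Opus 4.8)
The plan is to argue by contradiction, with Lemma~\ref{lem1} as the sole obstruction. Since $D'_2 \subset V$ and any compressing disk $E \subset W$ meet only along $S$, the disk $D'_2$ is weak reducing if and only if there is an essential simple closed curve $c$ in $S-K$, disjoint from $d'_2 = \partial D'_2$, that bounds a compressing disk in $W-K$. So I would suppose such a $c$ exists. By Lemma~\ref{lem1}, the word $w(c)$ read off from the intersections of $c$ with the blue arcs $a_{2i}$ is freely reduced to the empty word, and of course $c$ must be essential, hence neither inessential nor peripheral in $S-K$.

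First I would fix the position of $c$: isotope it in $S-K$, keeping it disjoint from $d'_2$, so as to minimise its intersection with $x$ and to put it in normal form with respect to the pants decomposition $\{P_i\}_{i=1}^{6}$. Because $c$ is connected and disjoint from $d'_2$, it lies in a single component $R$ of the surface obtained by cutting $S-K$ along $d'_2$. I would then read off, region by region, how the arcs $a_{2i}$ sit inside each such $R$, using that $d'_2$ is the band sum of $C_1$ and $C_2$ depicted in Figures~\ref{fig3} and~\ref{fig4}; this determines exactly which syllables $a_{2i}^{\pm 1}$ a curve in $R$ is able to contribute to its word.

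The heart of the argument is the claim that, for every complementary region $R$, an essential simple closed curve in $R$ is either inessential or peripheral in $S-K$, or else crosses the arcs $a_{2i}$ in a pattern whose word cannot be freely reduced to the empty word. The construction of $d'_2$ is arranged precisely so that each region forces at least one adjacent pair of syllables into $w(c)$ that does not cancel. Either conclusion contradicts the properties of $c$ extracted above, so no such $c$ exists and $D'_2$ is not a weak reducing disk.

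I expect the main obstacle to be this last step: controlling \emph{all} essential curves disjoint from $d'_2$ and showing their words are genuinely reduced and nonempty. The subtle point is to rule out a curve that winds back and forth so that consecutive syllables cancel; this requires checking that no complementary region of $d'_2$ contains a sub-arc permitting an $a_{2i}\,a_{2i}^{-1}$ cancellation, which is where the specific placement of the bands in Figure~\ref{fig4} must be used concretely rather than abstractly.
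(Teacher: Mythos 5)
Your opening reduction is fine: since $D'_2$ is properly embedded in $V$ and any compressing disk of $W-K$ meets $V$ only along $S$, weak reducibility of $D'_2$ is equivalent to the existence of an essential simple closed curve in $S-K$, disjoint from $d'_2$, bounding a compressing disk in $W-K$. But from there your text is a plan rather than a proof, and the plan has a genuine gap at exactly the step you yourself flag as ``the main obstacle.'' Your central claim --- that every simple closed curve in a complementary region of $d'_2$ is either inessential/peripheral in $S-K$ or carries a word that does not freely reduce to the empty word --- is never established, and Lemma \ref{lem1} cannot establish it, because Lemma \ref{lem1} is only a necessary condition for bounding and is vacuous for curves with empty word. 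Such curves exist and are essential: the boundary of a cap over any $a_{2i}$ and each loop $l_i$ miss all the blue arcs (empty word) and genuinely bound compressing disks in $W-K$; ruling these (and all other empty-word curves) out requires showing geometrically that they meet $d'_2$, a statement about which word-reading says nothing. Moreover your claim quantifies over \emph{all} essential curves in the complementary regions --- infinitely many isotopy classes, including parallel copies of $d'_2$ itself --- so it is strictly stronger than the lemma and not obviously true; if the word of $d'_2$ happened to reduce to the empty word, your claim would be false even though the lemma holds. Nothing in the proposal supplies a mechanism to verify the claim curve by curve.

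The missing idea, which is what the paper actually uses, is three-dimensional rather than curve-theoretic: given a compressing disk $F$ in $W-K$ disjoint from $D'_2$, put $F$ in minimal position with respect to the disk system $X$ (the bridge disks $\Delta_{2i}$ together with the $E_i$). Since $F$ cannot be a cap over an $a_{2i}$ or an $E_i$ (their boundaries meet $d'_2$), we have $F \cap X \neq \emptyset$, and an outermost disk of $F$ cut off by an outermost arc of $F \cap X$ yields a \emph{wave}: an essential properly embedded arc in some pair of pants $P_i$ with both endpoints on one boundary component. Such an arc separates the other two boundary components of $P_i$, so it must cross every arc joining them. The only concrete fact the paper then needs from Figure \ref{fig4} is that in \emph{every} $P_i$, subarcs of $d'_2$ join \emph{every} pair of distinct boundary components; hence every wave crosses $d'_2$, contradicting $F \cap D'_2 = \emptyset$. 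This disposes of all candidate disks at once, with no enumeration of curves and no use of Lemma \ref{lem1} at all (the word argument is reserved for Lemma \ref{lem3}, where waves disjoint from $d''_2$ do exist in $P_1$ and the outermost-disk step is still needed to get started). To close your argument you would have to import this outermost-disk/wave mechanism; as written, the proposal does not constitute a proof.
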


\begin{proof}
For any $P_i$ and any pair of two distinct boundary components of $P_i$,
there is a subarc of $d'_2$ in $P_i$ connecting the two components.
Suppose $F$ is a compressing disk in $W - K$ that is disjoint from $D'_2$.
Since $d'_2$ intersects $x$ minimally up to isotopy, we may assume that $F$ intersects $X$ minimally.
The disk $F$ cannot be isotopic to a cap over $a_{2i}$ and $E_i$ for any $i$.
So $F \cap X \ne \emptyset$.
Consider an outermost disk $F_0$ in $F$ cut off by an outermost arc of $F \cap X$.
The disk $F_0$ intersects some $P_i$ in a properly embedded arc
with two endpoints in the same component of $\partial P_i$, which is called a {\em {wave}}.
However, as mentioned at the beginning of the proof,
subarcs of $d'_2$ are an obstruction for such a wave, a contradiction.
\end{proof}

The disk $D''_2$ is a band sum of $C_1$ and $C_3$ as in Figure \ref{fig5}.
Let $d''_2 = \partial D''_2$.
It is essential in $S - K$.

\begin{figure}[ht!]
\begin{center}
\includegraphics[width=12cm]{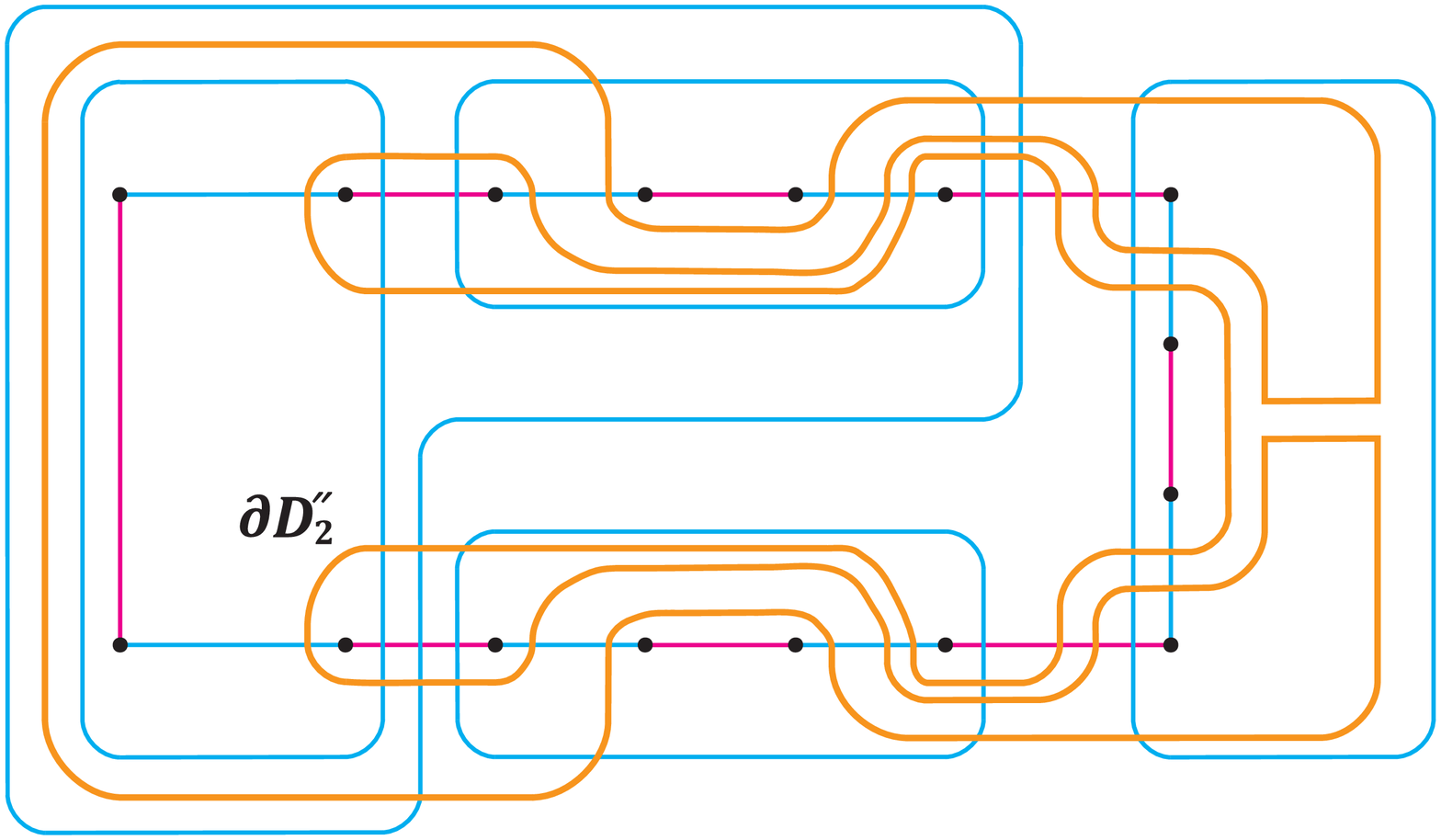}
\caption{The disk $D''_2$ is a band sum of $C_1$ and $C_3$.}\label{fig5}
\end{center}
\end{figure}

\begin{lemma}\label{lem3}
The disk $D''_2$ is not a weak reducing disk.
\end{lemma}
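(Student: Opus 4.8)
The plan is to prove Lemma~\ref{lem3} by imitating the argument used for Lemma~\ref{lem2}, since $D''_2$ is the band sum of $C_1$ and $C_3$ in exactly the same spirit as $D'_2$ was the band sum of $C_1$ and $C_2$. The key structural fact exploited in Lemma~\ref{lem2} is that the boundary curve $d'_2$ meets the pants decomposition $\{P_i\}$ in such a way that inside each piece $P_i$, \emph{every} pair of distinct boundary components is joined by a subarc of the curve. This ``full connectivity'' of the boundary curve across each pair of pants is precisely what obstructs a wave, and a wave is forced to exist on the outermost disk of any hypothetical disjoint compressing disk $F$ in $W-K$. So the first step is to verify the analogous connectivity property for $d''_2 = \partial D''_2$: namely, that for every $P_i$ and every pair of distinct boundary components of $P_i$, there is a subarc of $d''_2$ in $P_i$ connecting them. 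This is the main thing that must be checked by hand from Figure~\ref{fig5}, tracking how the band sum of $C_1$ and $C_3$ routes the boundary through the six pants pieces.

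Once that connectivity property is established, the remainder of the argument runs parallel to Lemma~\ref{lem2}. First I would suppose, for contradiction, that $F$ is a compressing disk in $W-K$ with $F \cap D''_2 = \emptyset$, and normalize so that $F$ meets $X = (\bigcup \Delta_{2i}) \cup (\bigcup E_i)$ minimally; this is legitimate because $d''_2$ already meets $x$ minimally up to isotopy. Next I would rule out the degenerate cases: $F$ cannot be isotopic to a cap over any $a_{2i}$ nor to any $E_i$, since each such disk would intersect $d''_2$ (this again follows from the connectivity property, as those caps and the $E_i$ correspond to curves that $d''_2$ is forced to cross). Hence $F \cap X \neq \emptyset$, and I may take an outermost arc of $F \cap X$ cutting off an outermost subdisk $F_0 \subset F$. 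The disk $F_0$ lies in a single $P_i$ and meets $\partial P_i$ in a properly embedded arc with both endpoints on the \emph{same} boundary component, i.e.\ a wave.

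The final step is to derive the contradiction: the existence of such a wave $F_0$ in $P_i$ is incompatible with the connectivity property of $d''_2$, because a subarc of $d''_2$ joining the two \emph{other} boundary components of $P_i$ must cross the wave, violating $F \cap D''_2 = \emptyset$. This is the same mechanism as in Lemma~\ref{lem2}, so I would phrase it identically.

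I expect the main obstacle to be purely the verification step, not the logical skeleton. The skeleton is a verbatim transplant of Lemma~\ref{lem2}'s proof, so the real content is confirming from Figure~\ref{fig5} that the band sum $C_1 \# C_3$ produces a boundary curve $d''_2$ realizing \emph{all} the required boundary-to-boundary connections in each pair of pants. Because $C_3$ is routed differently from $C_2$, the curve $d''_2$ passes through the pants pieces along a different pattern than $d'_2$, so one cannot simply quote Lemma~\ref{lem2}; the connectivity must be re-checked carefully (most plausibly via the word $w$ of the relevant curves in $\pi_1(W-K)$, invoking Lemma~\ref{lem1} to certify that no disjoint disk's boundary can be freely reduced to the empty word while avoiding a wave). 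If the figure makes the connectivity transparent, the proof is short; the only genuine risk is that $C_3$'s routing leaves some pair of boundary components in some $P_i$ unjoined, in which case a supplementary argument closing that gap would be required.
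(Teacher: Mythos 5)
There is a genuine gap, and it sits exactly where you flagged the risk. The ``full connectivity'' property that you propose to verify as the first step is \emph{false} for $d''_2$: the band sum of $C_1$ and $C_3$ leaves the pair of pants $P_1$ (the one containing $a_1$) with a pair of boundary components not joined by any subarc of $d''_2$, so a wave disjoint from $d''_2$ \emph{is} possible in $P_1$. This is precisely why the paper states Lemma~\ref{lem3} separately from Lemma~\ref{lem2} rather than quoting the same argument: if the connectivity held, $D''_2$ would be handled by a verbatim repetition of Lemma~\ref{lem2}, and moreover the whole construction would lose its point, since $D_2$ is built so that the easy wave obstruction fails for one of its surgery outputs. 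Your skeleton therefore collapses at the contradiction step: the outermost disk $F_0$ can perfectly well be a wave in $P_1$ disjoint from $d''_2$, and no contradiction arises from intersections with $D''_2$ alone.

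The ``supplementary argument'' you deferred is in fact the entire content of the paper's proof, and it requires ideas not present in your outline. The paper argues: (i) any wave disjoint from $d''_2$ must lie in $P_1$, and extending either end of such a wave along $\partial F$ forces $\partial F$ to cross $a_4$ or $a_{14}$, so the word $w$ of $\partial F$ in $\pi_1(W-K)$ is nonempty (Claim~\ref{claim1}); (ii) by Lemma~\ref{lem1}, $w$ freely reduces to the empty word, so some adjacent pair $a_{2i}a_{2i}^{-1}$ cancels; let $\beta$ be the subarc of $\partial F$ between the two cancelling intersection points; (iii) a separation argument using the fact that each loop $l_j$ separates $S$ and that waves occur only in $P_1$ shows the intersection points $p_1,\ldots,p_{2n}$ of $\beta$ with $\bigcup l_j$ pair up symmetrically ($p_k$ with $p_{2n+1-k}$ on the same $l_j$, Claim~\ref{claim2}), so the middle subarc between $p_n$ and $p_{n+1}$ is itself a wave, hence in $P_1$; (iv) extending the two ends of \emph{that} wave along $\partial F$, one end meets $a_4$ or $a_{14}$ first among the blue arcs while the other meets $a_6$, $a_8$, $a_{10}$, or $a_{12}$ first, contradicting that both endpoints of $\beta$ lie on the same arc $a_{2i}$. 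None of this is recoverable from the Lemma~\ref{lem2} mechanism; your proposal's fallback of ``invoking Lemma~\ref{lem1}'' gestures in the right direction but the decisive steps --- the nested-wave extraction via the separating loops and the asymmetry of the two wave extensions in Figure~\ref{fig6} --- are missing.
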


\begin{proof}
Suppose that $F$ is a compressing disk in $W - K$ disjoint from $D''_2$.
As in the proof of Lemma \ref{lem2}, we may assume that $F$ intersects $X$ minimally.
The disk $F$ cannot be isotopic to a cap over $a_{2i}$ and $E_i$ for any $i$,
so $F \cap X \ne \emptyset$.

\begin{claim}\label{claim1}
$\partial F$ intersects some $a_{2i}$, i.e.
the word $w$ of $\partial F$ is not an empty word before cancellation.
\end{claim}

\begin{proof}
Consider an outermost disk $F_0$ in $F$ cut off by an outermost arc of $F \cap X$.
It makes a wave.
A wave disjoint from $d''_2$ is possible only in $P_1$.
There are two possibilities for extending one end of a wave along $\partial F$.
See Figure \ref{fig6}.
In any case, $\partial F$ intersects $a_4$ or $a_{14}$.
\end{proof}

\begin{figure}[ht!]
\begin{center}
\includegraphics[width=12cm]{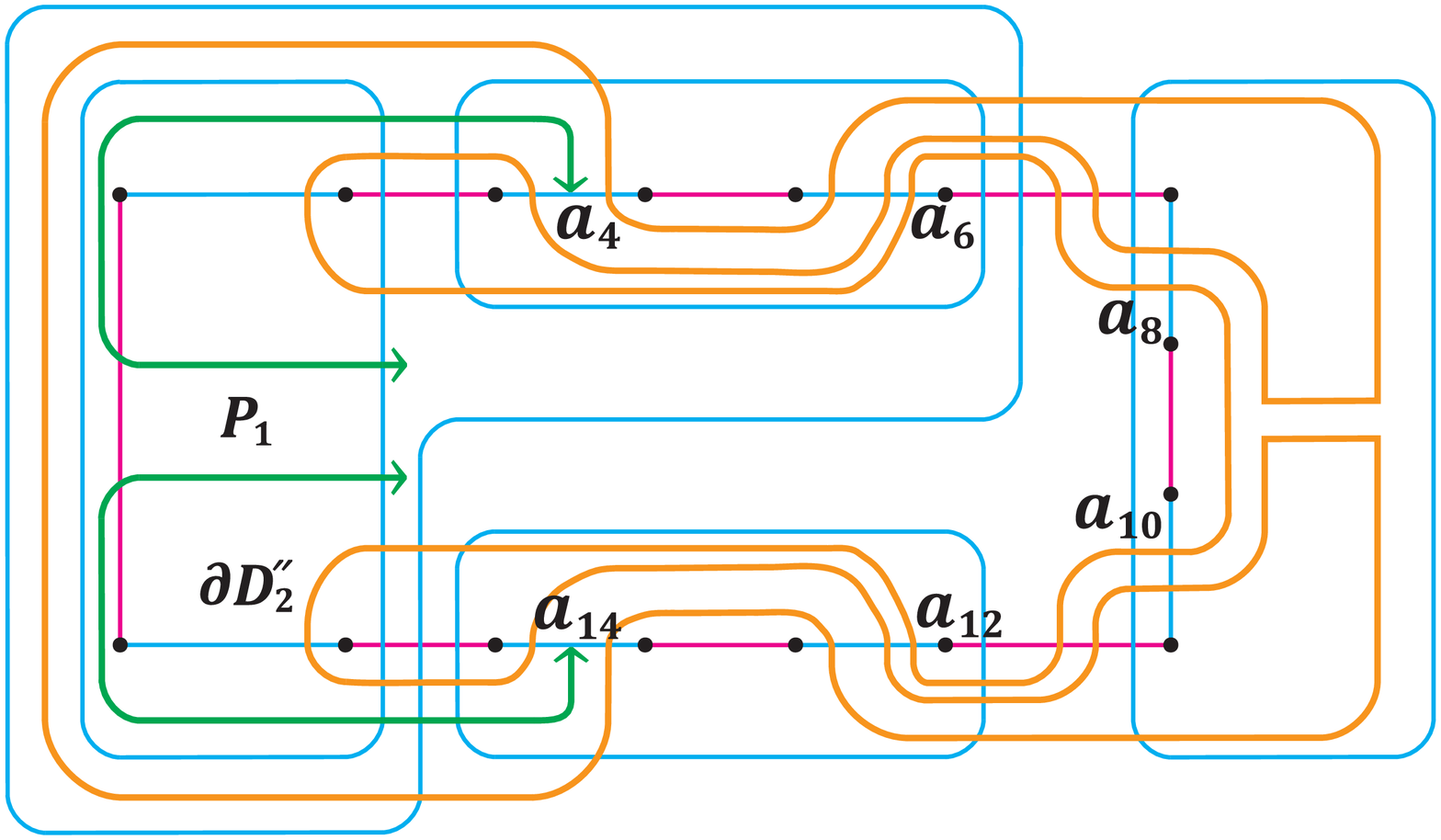}
\caption{Extending a wave.}\label{fig6}
\end{center}
\end{figure}

We use the same notation $a_{2i}$
for the generator of $\pi_1 (W - K)$ corresponding to the blue arc $a_{2i}$.
By Lemma \ref{lem1}, the nonempty word $w$ is freely reduced to an empty word.
Consider a cancellation of two adjacent generators, say $a_{2i}a^{-1}_{2i}$, in $w$.
Let $\beta$ be the subarc of $\partial F$ between the two points $p$ and $p'$
corresponding to $a_{2i}$ and $a^{-1}_{2i}$ respectively.
If $\beta$ does not intersect any $l_j$, then
$\beta$ is a wave with both endpoints in $a_{2i}$.
This cannot happen in our example, even in $P_1$.

Suppose that $\beta$ intersects $l_j$'s.
It necessarily intersects each $l_j$ at even number of points.
Let $p_1, \ldots, p_{2n}$ be the points of $\beta \cap (\bigcup^5_{j=1} l_j)$
appearing along $\beta$ in this order.
The two points $p_1$ and $p_{2n}$ belong to the same $l_j$.
Actually $p_k$ and $p_{2n+1-k}$ belong to the same $l_j$ for each $k$.

\begin{claim}\label{claim2}
$p_k$ and $p_{2n+1-k}$ belong to the same $l_j$ for each ($k = 1, \ldots, n$).
\end{claim}

\begin{proof}
Suppose that $m$ is the smallest index such that $p_m$ and $p_{2n+1-m}$ belong to the same $l_j$ but
$p_{m+1}$ and $p_{2n+1-(m+1)}$ belong to different $l_j$'s.
There are two cases.
The first case is that
neither $p_{m+1}$ nor $p_{2n+1-(m+1)}$ belongs to $l_j$ which contains $p_m$, and
the second case is that
one of $p_{m+1}$ or $p_{2n+1-(m+1)}$, say $p_{m+1}$, belongs to $l_j$ which contains $p_m$.
See Figure \ref{fig7}.
Note that a loop $l_j$ is separating in $S$ and
${\mathrm{int}} \, \beta$ does not intersect any blue ``arc".
So if a curve goes into a disk region bounded by $l_j$, then
there should be a wave in some pair of pants contained in the disk to come back to the outside.
Since there is only one pair of pants where wave is possible in our example,
one of the subarcs of $\beta$ cannot come back to outside.
\end{proof}

\begin{figure}[ht!]
\begin{center}
\includegraphics[width=14cm]{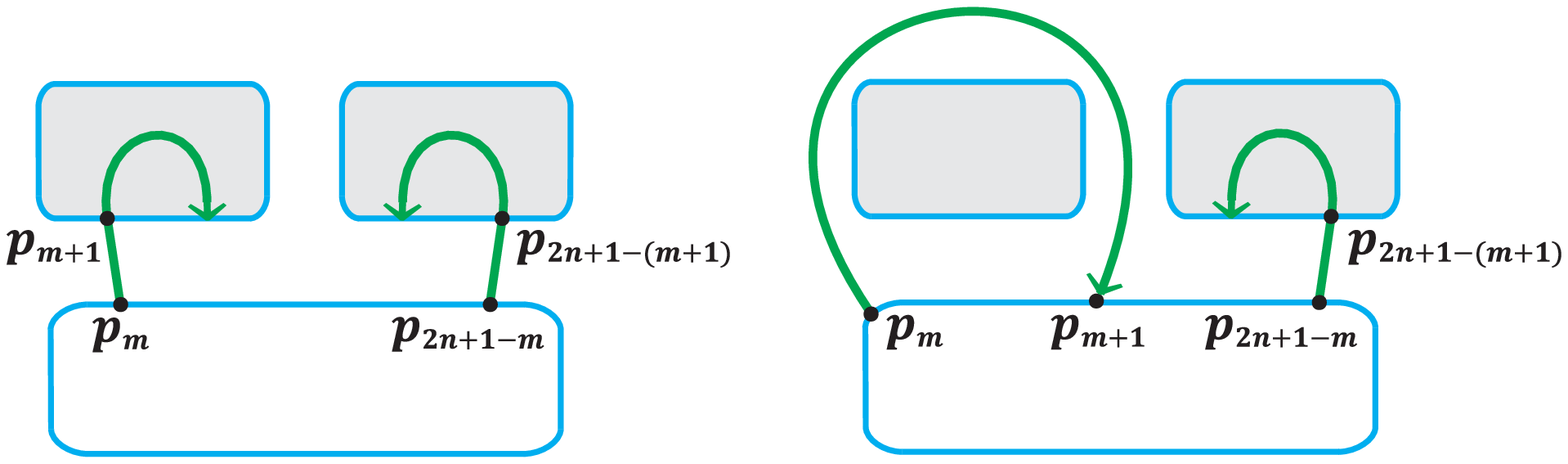}
\caption{The intersection of $\beta$ and $\bigcup l_j$.}\label{fig7}
\end{center}
\end{figure}

Hence $p_n$ and $p_{n+1}$ belong to the same $l_j$.
Then the subarc of $\beta$ between $p_n$ and $p_{n+1}$ is a wave.
As observed above, a wave is possible only in $P_1$.
Suppose we extend both ends of the wave along $\partial F$.
See Figure \ref{fig6}.
For one direction, $\partial F$ intersects $a_4$ or $a_{14}$ first among the blue arcs,
while for the other direction $\partial F$ cannot intersect $a_4$ or $a_{14}$ first.
(It intersects $a_6$ or $a_8$ or $a_{10}$ or $a_{12}$ first among the blue arcs.)
This contradicts that both endpoints of $\beta$ belong to the same $a_{2i}$.
\end{proof}

The disk $D_2$ is a band sum of $D'_2$ and $D''_2$ as in Figure \ref{fig8}.
The green curve in Figure \ref{fig8} bounds a compressing disk in $W - K$,
so $D_2$ is a weak reducing disk.
Hence both $D_1$ and $D_2$ are weak reducing disks.

\begin{figure}[ht!]
\begin{center}
\includegraphics[width=13cm]{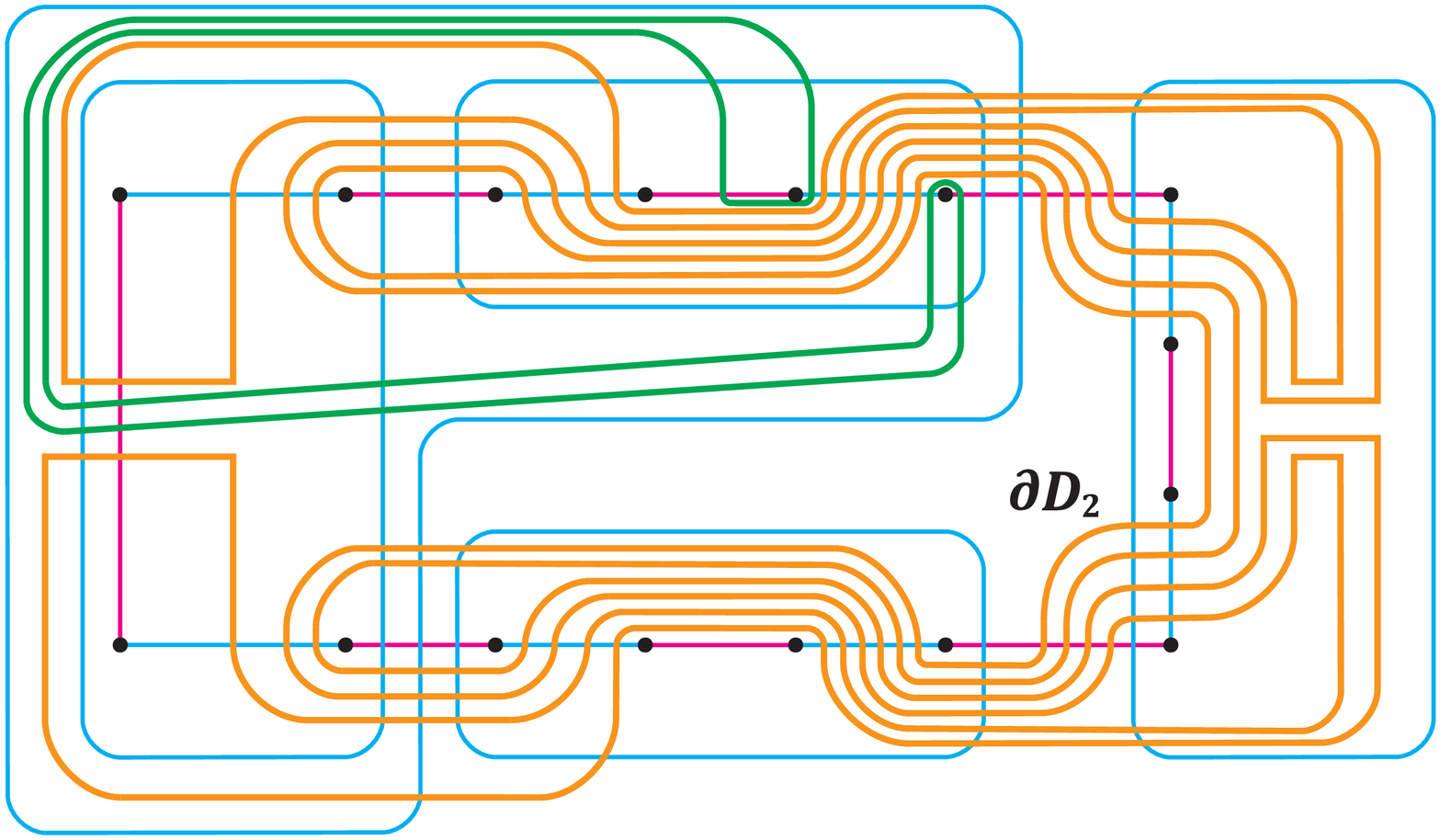}
\caption{$D_2$ is a weak reducing disk.}\label{fig8}
\end{center}
\end{figure}

The intersection $D_1 \cap D_2$ consists of two arcs $\alpha$ and $\beta$.
The arcs $\alpha$ and $\beta$ cut off outermost disks, say $G_1$ and $G_2$, from $D_1$ respectively.
Similarly, $\alpha$ and $\beta$ cut off outermost disks, say $H_1$ and $H_2$, from $D_2$ respectively.
See Figure \ref{fig9}.
A surgery of $D_1$ along $H_1$ results in two disks isotopic to $D'_2$ and $D''_2$ respectively, and
a surgery of $D_1$ along $H_2$ also results in two disks isotopic to $D'_2$ and $D''_2$ respectively.
A surgery of $D_2$ along $G_1$ (and $G_2$) also results in two disks
isotopic to $D'_2$ and $D''_2$ respectively.
This completes the proof of Theorem \ref{thm1}.

\begin{figure}[ht!]
\begin{center}
\includegraphics[width=13cm]{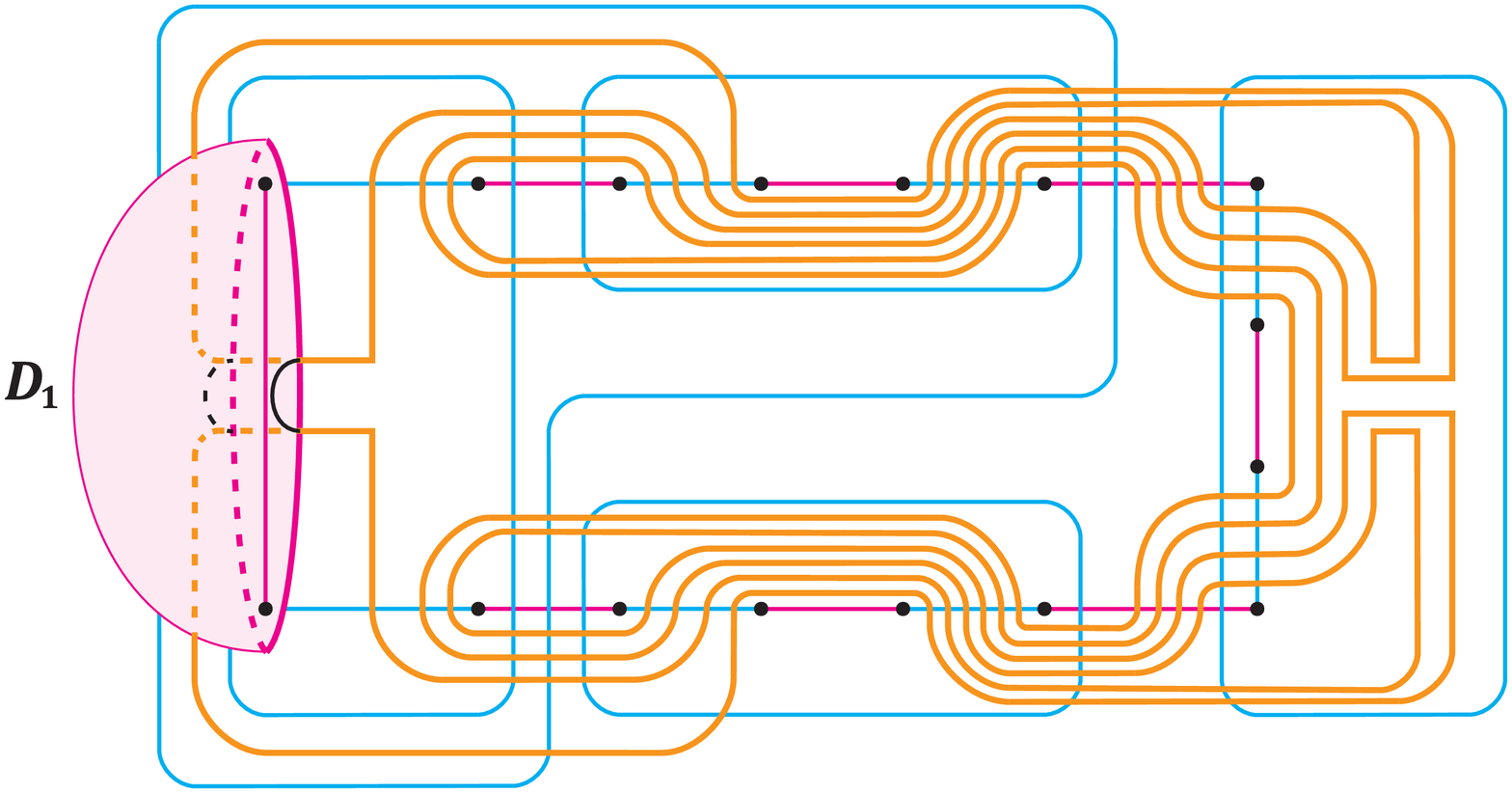}
\caption{A disk surgery does not yield any weak reducing disk.}\label{fig9}
\end{center}
\end{figure}




\end{document}